\documentclass[a4paper, english, 11pt]{article}
\pdfoutput=1

%
%
\usepackage[utf8]{inputenc}
\usepackage[T1]{fontenc}
\usepackage[english]{babel}
\usepackage{csquotes}
\usepackage{textcomp}
\usepackage{varioref}
\usepackage{amsmath}
\usepackage{amssymb}
\usepackage{amsthm}
\usepackage{graphicx}
\usepackage[section]{placeins}
\usepackage{listings}
\usepackage{color}
\usepackage{titlesec}
\usepackage{tikz}
\usetikzlibrary{patterns,decorations}
\usepackage{algpseudocode}
\usepackage{algorithmicx}
\usepackage{algorithm}
\usepackage{hyperref}
\usepackage{enumitem}
\usepackage{subfig}
\usepackage{longtable}
\usepackage{caption}
\captionsetup[figure]{justification=raggedright, labelsep=period, labelfont=sc, singlelinecheck=false}
\captionsetup[table]{justification=centering, labelsep=newline, labelfont=sc, singlelinecheck=false}
\usepackage{bookmark}
\usepackage{multirow}
\usepackage{booktabs}

%
%
\setcounter{tocdepth}{2}
\setcounter{secnumdepth}{3}

\definecolor{dkgreen}{rgb}{0,0.6,0}
\definecolor{gray}{rgb}{0.5,0.5,0.5}
\definecolor{mauve}{rgb}{0.58,0,0.82}

\lstset{frame=tb,
language=R,
aboveskip=3mm,
belowskip=3mm,
showstringspaces=false,
columns=flexible,
basicstyle={\small\ttfamily},
numbers=left,
numberstyle=\tiny\color{gray},
keywordstyle=\color{blue},
commentstyle=\color{gray},
stringstyle=\color{mauve},
breaklines=true,
breakatwhitespace=true
tabsize=3
}

%
%
\makeatletter
\newtheorem*{rep@theorem}{\rep@title}
\newcommand{\newreptheorem}[2]{%
\newenvironment{rep#1}[1]{%
 \def\rep@title{#2 \ref{##1}}%
 \begin{rep@theorem}}%
 {\end{rep@theorem}}}
\makeatother

\theoremstyle{plain}
\newtheorem{mytheorem}{Theorem}
\newtheorem{myprop}[mytheorem]{Proposition}
\newreptheorem{myprop}{Proposition}
\newtheorem{mylemma}[mytheorem]{Lemma}

\theoremstyle{definition}
\newtheorem{mydef}{Definition}

\newtheorem*{myremarks}{Remarks}

%
%
\newcommand{\Unif}{\text{\normalfont{Unif}}}

\newcommand{\bv}{\mathbf{v}}

\newcommand{\bx}{\mathbf{x}}

\newcommand{\by}{\mathbf{y}}

\newcommand{\bmu}{\boldsymbol{\mu}}

\newcommand{\bSigma}{\boldsymbol{\Sigma}}

\makeatletter
\newcommand*{\tp}{%
{\mathpalette\@tp{}}%
}
\newcommand*{\@tp}[2]{%
\raisebox{\depth}{$\m@th#1\intercal$}%
}
\makeatother

\algnewcommand\algorithmicinput{\textbf{Input:}}
\algnewcommand\INPUT{\item[\algorithmicinput]}

\algnewcommand\algorithmicmyreturn{\textbf{Return:}}
\algnewcommand\myRETURN{\item[\algorithmicmyreturn]}

\usepackage[backend    = biber, 
            style      = authoryear, 
            giveninits = true,
            isbn       = false,
            url        = false,
            doi        = false,
            eprint     = false,
            sorting    = nyt, 
            natbib     = true]{biblatex}
\renewbibmacro{in:}{%
  \ifentrytype{article}{}{\printtext{\bibstring{in}\intitlepunct}}}
\addbibresource{./library.bib}

\makeatletter
    \newrobustcmd*{\nobibliography}{%
      \@ifnextchar[
        {\blx@nobibliography}
        {\blx@nobibliography[]}}
    \def\blx@nobibliography[#1]{}
    \appto{\skip@preamble}{\let\printbibliography\nobibliography}
\makeatother

\usepackage[margin=1in]{geometry}

\newcommand{\blind}{0}

\def\spacingset#1{\renewcommand{\baselinestretch}%
{#1}\small\normalsize} \spacingset{1}
\spacingset{1} 

\begin{document}
\if1\blind
{
  \bigskip
  \bigskip
  \bigskip
  \begin{center}
    {\LARGE\bf Which principal components are most sensitive to distributional changes?} 
  \end{center}
  \medskip
} \fi

\if0\blind
{
  \title{\bf Which principal components are most sensitive to distributional changes?} 
  \author{Martin Tveten,
  Department of Mathematics, University of Oslo}
  \maketitle
} \fi

\begin{abstract}
PCA is often used in anomaly detection and statistical process control tasks.
For bivariate data, we prove that the minor projection (the least varying projection) of the PCA-rotated data is the most sensitive to distributional changes,
where sensitivity is defined by the Hellinger distance between distributions before and after a change.
In particular, this is almost always the case if only one parameter of the bivariate normal distribution changes, i.e., the change is sparse.
Simulations indicate that the minor projections are the most sensitive for a large range of changes and pre-change settings in higher dimensions as well.
This motivates using the minor projections for detecting sparse distributional changes in high-dimensional data.
\end{abstract}
\textbf{Keywords}: Change-point detection; monitoring; anomaly detection; statistical process control; variance; correlation. \\

\newpage

\section{Introduction}
It is popular to use principal component analysis (PCA) for anomaly detection and stochastic process control (SPC).
Using PCA in SPC goes back to the work of \citet{jackson_application_1957} and \citet{jackson_control_1979},
and its various extensions (see \citet{ketelaere_overview_2015} and \citet{rato_systematic_2016} for an overview) have been succesfully applied to many real data situations.
Within the machine learning litterature on anomaly detection, \citet{mishin_real_2014} use PCA for temperature monnitoring at Johns Hopkins, 
\citet{harrou_improved_2015} apply PCA-based anomaly detection to find segments with abnormal rates of patient arrivals at an emergency department, 
and \citet{camacho_pca-based_2016} relate PCA-based monitoring in SPC with modern anomaly detection in statistical networks.
\citet{pimentel_review_2014} provide an extensive review of novelty detection techniques and applications, and it is pointed to PCA being very useful for detecting outliers in this setting, 
for a large range of real world examples, covering industrial monitoring, video surveilance, text mining, sensor networks and IT security.
In this review, as well as in \citet{lakhina_diagnosing_2004} and \citet{huang_-network_2007}, it is acknowledged that it is most often the residual subspace of PCA that is most useful for outlier detection.
\citet{kuncheva_pca_2014} study PCA in the related field of change detection, but with a fascinating twist.

In general, most PCA-based methods utilize PCA in the intended way of creating a model based on retaining a small number of the most varying projections onto eigenvectors of the correlation matrix.
The data is thus split into a model subspace that explains most of the variance in the data and a residual subspace.
Rather than using PCA in this regular way for change detection as well, 
\citet{kuncheva_pca_2014} pose the question of which projections/principal components are the \textit{most sensitive to distributional changes} in the data. 
Sensitivity is measured by a statistical distance between the marginal distributions of projections before and after a change.
They give a brief two-dimensional theoretical example that motivates monitoring the minor projections (the least varying projections) to detect anomalies.
An important feature of such an approach is that it can be used to choose a subspace based on criteria linked to change detection, rather than on retaining information/variance in the data,
potentially yielding better change and anomaly detection methods.
The goal of this article is to give a more complete treatment of the bivariate problem of \citet{kuncheva_pca_2014}, to understand the projection's sensitivity under a simple setup,
and then study how these results carry over to higher dimensions by simulations.

There are three main differences between our approach and the approach of \citet{kuncheva_pca_2014}.
First of all, \citet{kuncheva_pca_2014} look at changes in the projections directly, without connecting them to which changes in the original data that cause them.
Since it is changes in the distribution of the original data that are of interest,
the propositions one actually wants to obtain concern changes in the original data, not the resulting ones in the projections.
We trace the changes in the original data through the projections, to see how the distributions of the projections change as a consequence.
For instance, we give an answer to questions like "if the variances of the original components double, how does that affect the projections, and which of the projections will change the most?",
as opposed to "if the variances of the projections both increase by adding 1, which of the projections change that most?" of \citet{kuncheva_pca_2014}.
In the latter case, clearly the projection with the lowest variance will change the most relative to what it was,
and we have gained no knowledge about how the original data must change to achieve this change in the projections.

Secondly, we study a much larger space of possible changes, including changes in only one parameter at a time.
Such change scenarios where only a few of the dimensions change we call \textit{sparse changes}.
This turns out to be the main gist of this work, since it is especially in the sparse change scenarios that the minor projection is the most sensitive.
Hence, we obtain clearer hints of which change scenarios the minor components might be the most useful for in higher dimensional change detection problems as well.

Thirdly, we measure sensitivity by the normal Hellinger distance between the marginal distributions of projections before and after a change,
while \citet{kuncheva_pca_2014} use the normal Bhattacharyya distance.
We refer to Section \ref{sec:problem} for an explanation of this choice.

In short, for bivariate data, we show that if only one mean changes in any direction, one variance increases, or the correlation changes, the minor projection is the most sensitive.
The principal projection is the most sensitive if one variance decreases and the correlation is not too close to $1$.
Lastly, if both means change, which projection is the most sensitive depends on the relative directions and sizes of change, 
and when both variances change by an equal amount, both projections are equally sensitive.
Thus, on average (with all change scenarios equally likely), the minor projection is the most sensitive, mainly due to the sparse changes.
Our simulations show that the trend of the minor projections being more sensitive on average also holds for higher dimensions.
This knowledge is important to create more efficient change or anomaly detection methods.

The rest of the article is organized as follows: Section \ref{sec:problem} formulates the problem precisely, 
Section \ref{sec:results} contains the theoretical results about sensitivity to changes in two dimensions, 
while in Section \ref{sec:explore_higher}, we explore sensitivity in higher dimensions by simulations.
The proofs for the results in Section \ref{sec:results} are given in Appendix \ref{sec:proofs}.

\section{Problem formulation} \label{sec:problem}
Consider $n$ independent observations $\mathbf{x}_t \in \mathbb{R}^D$.
For $t = 1, \ldots, \kappa$, where $1 < \kappa < n$, the data has mean $\bmu_0$ and
covariance matrix $\bSigma_0$, while for $t = \kappa + 1, \ldots, n$, the data has mean $\bmu_1$ and covariance matrix $\bSigma_1$.
Assume without loss of generality that the data is standardized with respect to the pre-change parameters, so that $\bmu_0 = \mathbf{0}$
and $\bSigma_0$ is a correlation matrix.
For $D = 2$, the changed mean is given by $\bmu_1 = (\mu_1, \mu_2)^\tp$ and the covariance matrices by
\[
  \bSigma_0 = 
  \begin{pmatrix}
    1 & \rho \\
    \rho & 1
  \end{pmatrix}
  \quad \text{and} \quad
  \bSigma_1 = 
  \begin{pmatrix}
    a_{11}^2 & a_{11}a_{22}a_{12}\rho \\
    a_{11}a_{22}a_{12}\rho & a_{22}^2
  \end{pmatrix}.
\]
where
\begin{equation}
  -1 < \rho,\; a_{12} \rho < 1 \quad \text{and}\quad \rho \not= 0.
  \label{eq:corBound}
\end{equation}
Here, $a_{11}$ and $a_{22}$ denote multiplicative change factors for the standard deviation. 
For example, if $a_{11} = 0.5$ the standard deviation is half of what it was originally.
Similarly, $a_{12}$ is the change factor for the correlation.
Note that we exclude the degenerate cases of correlations $-1$ and $1$.

Next, let $\{ \lambda_j, \bv_j \}_{j = 1}^D$ be the normalized eigensystem of $\bSigma_0$, ordered by $\lambda_1 \geq \ldots \geq \lambda_D$.
The orthogonal projections $y_{j, t} = \bv_j^\tp \bx_t$ are our main objects of interest.
We refer to $y_{j, t}$ as \textit{principal projections} when $j$ is close to $1$ and \textit{minor projections} if $j$ is close to $D$.
As this is meant as approximate terminology, we do not specify further what "close" means.
The general problem is to find out which of the projections that are the most sensitive to different distributional changes defined by $(\bmu_1, \bSigma_1)$, 
for each pre-change correlation matrix $\bSigma_0$.
In the bivariate case, $(\bSigma_0, \bmu_1, \bSigma_1)$ is fully specified by $(\rho, \mu_1, \mu_2, a_{11}, a_{12}, a_{22})$.

We define sensitivity to changes as the normal Hellinger distance between the marginal distribution of a projection before and after a change.
The squared Hellinger distance between two normal distributions $p(x) = N(x | \xi_1, \sigma_1)$ and $q(x) = N(x | \xi_2, \sigma_2)$ is given by
\[
  H^2(p, q) = 1 - \sqrt{\frac{2\sigma_1\sigma_2}{\sigma_1^2 + \sigma_2^2}} \exp \left\{ -\frac{1}{4} \frac{(\xi_1 - \xi_2)^2}{\sigma_1^2 + \sigma_2^2} \right\}.
\]
The formal definition of sensitivity to changes is contained in Definition \ref{def:sensitivity}.
\begin{mydef} \label{def:sensitivity}
For $j = 1, \ldots, D$, let $p_j$ and $q_j$ denote the marginal pre- and post-change density functions of $y_{j, t}$, respectively, given by
\begin{align*}
  p_j(y) &= N(y |\; \bv_j^\tp \bmu_0, \bv_j^\tp \bSigma_0 \bv_j) = N(y |\; 0, \lambda_j) \\
  q_j(y) &= N(y |\; \bv_j^\tp \bmu_1, \bv_j^\tp \bSigma_1 \bv_j).
\label{eq:projPdf}
\end{align*}
The \textit{sensitivity of the $j$'th projection based on $\bSigma_0$ to the change specified by $(\bmu_1, \bSigma_1)$} is defined as $H(p_j, q_j)$, abbreviated by $H_j$ or $H_j(\bSigma_0, \bmu_1, \bSigma_1)$.
\end{mydef}

In light of Definition \ref{def:sensitivity}, our aim in the next section is to study for which pre-change parameters and changes the inequality $H_2 > H_1$ holds when $D = 2$.

\begin{myremarks}
  (i) \citet{kuncheva_pca_2014} also define sensitivity as a divergence between distributions before and after a change, but use the Bhattacharyya distance.
  The closely related Hellinger distance was chosen here because it turned out to be simpler to prove the sensitivity propositions because of Lemma \ref{lem:HellingerEquiv}, found in the Appendix.
  It is also an advantageous feature of the Hellinger distance that it is a true metric and takes values in $[0, 1]$.
  That it is a true metric implies for instance that a change in variance from $1$ to $a > 1$ is an equally large change
  as from $1$ to $1/a$.
  We find this an appealing feature because it is also a property of the generalized likelihood ratio test for a change in
  the mean and/or variance of normal data (see \citet{hawkins_statistical_2005} for the corresponding test statistic).

  (ii) The difference between our approach and the work of \citet{kuncheva_pca_2014} can now be stated more clearly.
  Our aim is to study the sensitivity of the $y_{j, t}$'s as functions of parameters of the original data $\bx_t$.
  \citet{kuncheva_pca_2014}, on the other hand, study (additive) changes in the parameters of $\by_t$ directly;
  for instance, $\lambda_j$ changing to $\lambda_j + a$ for all $j$, but without relating this $a$ back to which $\bSigma_1$'s this change corresponds to.
\end{myremarks}

\section{Bivariate results}  \label{sec:results}
This section contains all the bivariate results about sensitivity to changes. The detailed proofs are given in Appendix \ref{sec:proofs}.

For changes in the mean in two-dimensional data, the following proposition gives the condition for determining which projection that is the most sensitive.
\begin{myprop} \label{prop:mean}
  Let $a_{11} = a_{22} = a_{12} = 1$ and $\mu_1, \mu_2 \in \mathbb{R}$ while not both being $0$ simultaneously (only the mean changes).
  $H_2 > H_1$ if and only if $(\mu_1 - \mu_2)^2 > \mu_1\mu_2 (4/|\rho| - 2)$.
  In particular, for $|\rho| \in (0, 1)$,
  \begin{enumerate}[label=(\roman*)]
    \item $H_2 > H_1$ if one of $\mu_1$ and $\mu_2$ is $0$ while the other is not (one mean changes).
    \item $H_2 > H_1$ if $\mu_1 = -\mu_2 = \mu \not= 0$ (equal changes in opposite directions).
    \item $H_2 < H_1$ if $\mu_1 = \mu_2 = \mu \not= 0$ (equal changes in the same direction).
  \end{enumerate}
\end{myprop}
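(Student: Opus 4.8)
The plan is to exploit the hypothesis $a_{11}=a_{22}=a_{12}=1$, which forces $\bSigma_1 = \bSigma_0$, so that every projection retains its pre-change variance: $\bv_j^\tp \bSigma_1 \bv_j = \bv_j^\tp \bSigma_0 \bv_j = \lambda_j$. Hence in passing from $p_j$ to $q_j$ only the mean changes, from $\bv_j^\tp\bmu_0 = 0$ to $\bv_j^\tp\bmu_1$. Because the two variances coincide, the square-root prefactor in the Hellinger formula equals $1$, and $H_j$ reduces to a strictly increasing function of the single quantity $(\bv_j^\tp\bmu_1)^2/\lambda_j$. I would invoke Lemma \ref{lem:HellingerEquiv} to make this monotone equivalence precise, so that the inequality of interest is recast, free of any transcendental terms, as
\[
  \frac{(\bv_2^\tp\bmu_1)^2}{\lambda_2} > \frac{(\bv_1^\tp\bmu_1)^2}{\lambda_1}.
\]

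Next I would substitute the explicit eigenstructure of the $2\times2$ correlation matrix, $\lambda_1 = 1 + |\rho|$ and $\lambda_2 = 1 - |\rho|$, with unit eigenvectors $\bv_1 = \tfrac{1}{\sqrt2}(1, \sign(\rho))^\tp$ and $\bv_2 = \tfrac{1}{\sqrt2}(1, -\sign(\rho))^\tp$, giving $\bv_1^\tp\bmu_1 = \tfrac{1}{\sqrt2}(\mu_1 + \sign(\rho)\mu_2)$ and $\bv_2^\tp\bmu_1 = \tfrac{1}{\sqrt2}(\mu_1 - \sign(\rho)\mu_2)$. Since both denominators are strictly positive by \eqref{eq:corBound}, I can cross-multiply and collect terms: the symmetric part contributes $\mu_1^2+\mu_2^2$, i.e.\ eventually $(\mu_1-\mu_2)^2$, while the cross-products $\mu_1\mu_2$ survive asymmetrically and produce the $\mu_1\mu_2$ factor on the right. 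Rearranging this polynomial inequality yields the claimed equivalence $(\mu_1 - \mu_2)^2 > \mu_1\mu_2(4/|\rho| - 2)$. I expect the bookkeeping of $\sign(\rho)$ to be the one genuinely delicate point, since the orientation of the mean shift relative to the principal and minor axes flips with the sign of $\rho$; to tame it I would reduce to $\rho > 0$ via the coordinate reflection $x_2 \mapsto -x_2$, which sends $\rho \mapsto -\rho$ while leaving each projection's distribution (and hence each $H_j$) invariant and explaining the appearance of $|\rho|$, or alternatively carry $\sign(\rho)$ through and verify it cancels in the final form.

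The three corollaries then follow by specialization once the master condition is in hand. For (i), if exactly one of $\mu_1,\mu_2$ vanishes, then $\bv_1^\tp\bmu_1$ and $\bv_2^\tp\bmu_1$ have equal magnitude, since each eigenvector has entries $\pm1/\sqrt2$ and $\bmu_1$ has a single nonzero coordinate; the displayed inequality collapses to $1/\lambda_2 > 1/\lambda_1$, i.e.\ $\lambda_1 > \lambda_2$, which always holds, and indeed the right-hand side of the master condition is then $0$ while the left-hand side is positive. For (ii) with $\rho>0$ the shift $(\mu,-\mu)^\tp$ is proportional to $\bv_2$, so $\bv_1^\tp\bmu_1 = 0$ and $H_1 = 0 < H_2$; for (iii) the shift $(\mu,\mu)^\tp$ is proportional to $\bv_1$, so $\bv_2^\tp\bmu_1 = 0$ and $H_2 = 0 < H_1$. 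Each case is a one-line substitution, so the only real work, and the main obstacle, is the general algebraic simplification together with the $\sign(\rho)$ reduction that precedes it; everything downstream is evaluation.
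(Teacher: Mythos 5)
Your strategy coincides with the paper's: since $\bSigma_1=\bSigma_0$, each $H_j$ is a strictly increasing function of $(\bv_j^\tp\bmu_1)^2/\lambda_j$, and the comparison becomes a polynomial inequality via the explicit eigenstructure. (Incidentally, Lemma~\ref{lem:HellingerEquiv} cannot be invoked for this reduction --- it is stated for zero-mean normals differing in variance --- but you do not need it: with equal variances the Hellinger prefactor is $1$ and the monotonicity is immediate, as your own first sentence shows.) The genuine gap is that the one computation carrying all the content is deferred, and both of your proposed ways of completing it fail. Doing the algebra in your sign-aware setup, with $s=\sign(\rho)$,
\[
  \frac{(\mu_1-s\mu_2)^2}{1-|\rho|} > \frac{(\mu_1+s\mu_2)^2}{1+|\rho|}
  \iff |\rho|\,(\mu_1^2+\mu_2^2) > 2s\,\mu_1\mu_2
  \iff (\mu_1-\mu_2)^2 > \mu_1\mu_2\Bigl(\frac{2}{\rho}-2\Bigr),
\]
so $\sign(\rho)$ does \emph{not} cancel --- the cross term retains the signed $\rho$ --- and the coefficient is $2$, not $4$. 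Your fallback reflection argument is also invalid: $x_2\mapsto-x_2$ maps $(\rho,\mu_1,\mu_2)$ to $(-\rho,\mu_1,-\mu_2)$, i.e., it flips the mean change together with the correlation, so it establishes $H_j(\rho,\mu_1,\mu_2)=H_j(-\rho,\mu_1,-\mu_2)$, not invariance of $H_j$ under $\rho\mapsto-\rho$ with $\bmu_1$ held fixed. That joint flip is precisely why the sign survives in the final condition.

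The consequence shows up exactly where you restricted attention to $\rho>0$: for $\rho<0$, claims (ii) and (iii) trade places. With $\rho<0$ and $\mu_1=\mu_2=\mu\neq0$, the minor axis is $(1,1)^\tp/\sqrt{2}$, the shift lies entirely along it, and $H_1=0<H_2$ --- the opposite of (iii); symmetrically, $(\mu,-\mu)^\tp$ then lies along the principal axis, giving $H_2=0<H_1$, the opposite of (ii). So your verification of (ii) and (iii) for $\rho>0$ is not a restriction that can be lifted afterwards. In fairness, the paper's own proof has the same two blind spots: it pairs the means $m_1,m_2$ of \eqref{eq:PCmeans}, computed from the positive-$\rho$ eigenvectors, with the $|\rho|$-variances (legitimate only for $\rho>0$), and its displayed chain passes from $(1+|\rho|)(\mu_1-\mu_2)^2>(1-|\rho|)(\mu_1+\mu_2)^2$ to a line containing $\mu_1\mu_2(2|\rho|-4)$, where correct expansion gives $\mu_1\mu_2(2|\rho|-2)$; accordingly the stated equivalence with constant $4/|\rho|-2$ should read $2/\rho-2$. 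Your blind reconstruction thus reproduces the paper's claims faithfully, but the algebra you postponed, once carried out, contradicts them rather than confirming them. Only part (i), where $\mu_1\mu_2=0$, is untouched by either issue, and your argument for it is correct for both signs of $\rho$.
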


When both variances change by the same amount, we get that both projections are equally sensitive no matter what the pre-change correlation or size of the change is.
\begin{myprop} \label{prop:2var}
  Let $\mu_1 = \mu_2 = 0$, $a_{12} = 1$ and $a_{11} = a_{22} = a \not= 1$ (both variances change equally).
  For any $|\rho| \in (0, 1)$ and $a > 0$, $H_2 = H_1$.
\end{myprop}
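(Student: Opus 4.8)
The plan is to exploit the fact that when both standard deviations scale by the same factor $a$ while the correlation is left unchanged, the post-change covariance matrix becomes a scalar multiple of the pre-change one. This proportionality forces the two projections to share the same variance ratio, and since the means do not move, that ratio is all the Hellinger distance sees.

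First I would substitute $a_{11} = a_{22} = a$ and $a_{12} = 1$ into the expression for $\bSigma_1$ given in Section \ref{sec:problem}. The off-diagonal entry becomes $a \cdot a \cdot 1 \cdot \rho = a^2 \rho$ and the diagonal entries are $a^2$, so that $\bSigma_1 = a^2 \bSigma_0$. Because $\bSigma_1$ and $\bSigma_0$ differ only by the positive scalar $a^2$, they have the same eigenvectors $\bv_j$, and the post-change projection variance is $\bv_j^\tp \bSigma_1 \bv_j = a^2 \bv_j^\tp \bSigma_0 \bv_j = a^2 \lambda_j$. Since $\bmu_1 = \mathbf{0} = \bmu_0$, both the pre- and post-change projection means equal $0$, so the mean-difference term in the Hellinger formula vanishes for every $j$.

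Next I would plug these quantities into the normal Hellinger formula, using pre-change standard deviation $\sqrt{\lambda_j}$ and post-change standard deviation $a\sqrt{\lambda_j}$ (valid since $a > 0$). The eigenvalue $\lambda_j$ then cancels between numerator and denominator, leaving $H_j^2 = 1 - \sqrt{2a / (1 + a^2)}$, an expression carrying no dependence on $j$. Evaluating this for $j = 1$ and $j = 2$ gives $H_1 = H_2$, which is the claim.

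There is no genuine obstacle here: the statement is essentially a one-line consequence of the identity $\bSigma_1 = a^2 \bSigma_0$ together with the scale-invariance of the mean-zero Hellinger distance. The only point deserving a moment of care is the notational matching between the variance parametrization used in Definition \ref{def:sensitivity} and the standard-deviation parametrization of the Hellinger formula; once the projection standard deviations $\sqrt{\lambda_j}$ and $a\sqrt{\lambda_j}$ are correctly identified, the cancellation of $\lambda_j$ is immediate and the conclusion follows for all $|\rho| \in (0,1)$ and all $a > 0$.
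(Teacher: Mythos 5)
Your proof is correct, but it takes a genuinely different route from the paper's. The paper works from the explicit projection-variance formulas \eqref{eq:topPCvariances} and \eqref{eq:botPCvariances}, computes the log variance ratios $\log r_1 = \log r_2 = |\log a^2|$, and then concludes $H_1 = H_2$ by appeal to Lemma \ref{lem:HellingerEquiv} --- and note that since the lemma as stated concerns strict inequalities, the paper must invoke ``arguments similar to'' its proof to handle the equality case. Your argument bypasses the lemma entirely: the matrix identity $\bSigma_1 = a^2\bSigma_0$ shows that every projection variance is scaled by the same factor $a^2$ (same eigenvectors, eigenvalues $a^2\lambda_j$), and direct substitution into the mean-zero Hellinger formula gives the common value $H_j^2 = 1 - \sqrt{2a/(1+a^2)}$ for both $j$. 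This buys two things the paper's proof does not: you obtain the exact value of the shared sensitivity rather than merely the equality, and the argument is dimension-free --- it shows immediately that in any dimension $D$, scaling all standard deviations by $a$ with correlations fixed makes all $D$ projections equally sensitive. The paper's route, by contrast, reuses the lemma machinery it needs anyway for Propositions \ref{prop:1var} and \ref{prop:cor}, so it is stylistically uniform with the rest of the appendix, but it is less transparent about the structural reason the equality holds, namely the proportionality of the pre- and post-change covariance matrices.
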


The picture becomes more complicated when only one variance changes.
If the variance increases, the minor projection is always the most sensitive.
On the other hand, if the variance decreases, the principal projection is mostly the most sensitive,
but not always if the pre-change correlation is high ($\gtrsim 0.87$).
In total, this gives a slight edge to the minor projection.
\begin{myprop} \label{prop:1var}
  Let $\mu_1 = \mu_2 = 0$, $a_{12} = 1$, and either $a_{11} = 1$ and $a_{22} = a \not= 1$, or $a_{11} = a$ and $a_{22} = 1$, where $a > 0$ (one variance changes).
  \begin{enumerate}[label=(\roman*)]
    \item For any $|\rho| \in (0, 1)$ and $a > 1$ (variance increase), $H_2 > H_1$.
    \item When $|\rho| \in (0, 1)$ and $a \in (0, 1)$ (variance decrease), $H_2 < H_1$ in most cases. 
    The only exception is if $|\rho| \in (\sqrt{3}/2, 1)$ and $a \in (0, \sqrt{4\rho^2 - 3})$, where $H_2 > H_1$.
  \end{enumerate}
\end{myprop}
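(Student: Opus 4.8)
The plan is to exploit the fact that here the mean is unchanged ($\bmu_1 = \mathbf{0}$), so by Definition \ref{def:sensitivity} each $H_j$ depends only on the pre- and post-change variances $\lambda_j = \bv_j^\tp\bSigma_0\bv_j$ and $\sigma_{1j}^2 = \bv_j^\tp\bSigma_1\bv_j$ of the $j$'th projection, with no mean-shift contribution. Via Lemma \ref{lem:HellingerEquiv} this turns $H_2 > H_1$ into the statement that the post-to-pre variance ratio $\beta_j := \sigma_{1j}^2/\lambda_j$ of the minor projection is farther from $1$ (multiplicatively) than that of the principal projection, i.e.\ $\max(\beta_2, 1/\beta_2) > \max(\beta_1, 1/\beta_1)$. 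Before computing, I would observe that flipping the sign of the second coordinate sends $\rho \mapsto -\rho$ while leaving the eigenvalues and both projection variances invariant, and that swapping the two coordinates interchanges the roles of $a_{11}$ and $a_{22}$; hence it suffices to treat $\rho = |\rho| > 0$ and the single case $a_{11} = a$, $a_{22} = 1$.

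Next I would insert the equicorrelation eigenstructure $\lambda_1 = 1 + \rho$, $\lambda_2 = 1 - \rho$, with eigenvectors proportional to $(1, \sign\rho)^\tp$ and $(1, -\sign\rho)^\tp$, together with $\bSigma_1 = \left(\begin{smallmatrix} a^2 & a\rho \\ a\rho & 1\end{smallmatrix}\right)$, to obtain $\sigma_{11}^2 = (a^2 + 2a\rho + 1)/2$ and $\sigma_{12}^2 = (a^2 - 2a\rho + 1)/2$, hence
\[
  \beta_1 = \frac{a^2 + 2a\rho + 1}{2(1+\rho)}, \qquad \beta_2 = \frac{a^2 - 2a\rho + 1}{2(1-\rho)}.
\]
The whole argument then rests on three elementary identities I would verify by direct expansion: the sign of $\beta_1 - 1$ equals that of $a - 1$ (so $\max(\beta_1, 1/\beta_1)$ is $\beta_1$ when $a > 1$ and $1/\beta_1$ when $a < 1$); the difference $\beta_2 - \beta_1 = \rho(a-1)^2/(1-\rho^2)$ is strictly positive for every admissible $(\rho, a)$; and the product satisfies $\beta_1\beta_2 - 1 = (a^2 - 1)(a^2 + 3 - 4\rho^2)/[4(1-\rho^2)]$.

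For part (i) ($a > 1$) both ratios exceed $1$: the sign identity gives $\beta_1 > 1$, and $\beta_2 > \beta_1$ by positivity of $\beta_2 - \beta_1$, so $\max(\beta_2, 1/\beta_2) = \beta_2 > \beta_1 = \max(\beta_1, 1/\beta_1)$, yielding $H_2 > H_1$ at once. For part (ii) ($a < 1$) I would split on whether $\beta_2$ lies below or above $1$, equivalently on the sign of $\beta_2 - 1 = (a-1)(a + 1 - 2\rho)/[2(1-\rho)]$. When $a > 2\rho - 1$ (in particular whenever $\rho \le 1/2$) both ratios are below $1$; since $\beta_2 > \beta_1$ we get $1/\beta_2 < 1/\beta_1$, so $H_2 < H_1$. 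The only remaining regime is $\rho > 1/2$ with $a < 2\rho - 1$, where $\beta_2 > 1 > \beta_1$, so that $H_2 > H_1 \iff \beta_2 > 1/\beta_1 \iff \beta_1\beta_2 > 1$; the product identity shows this is equivalent (using $a^2 - 1 < 0$) to $a^2 < 4\rho^2 - 3$, which is nonvacuous only for $\rho > \sqrt{3}/2$ and then reads $a < \sqrt{4\rho^2 - 3}$. Comparing squares gives $\sqrt{4\rho^2 - 3} < 2\rho - 1$, so the exceptional region sits inside the subcase $a < 2\rho - 1$ and no cases are double-counted.

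I expect the bookkeeping in part (ii) to be the main obstacle: one must correctly locate where the minor-projection variance ratio crosses $1$ (which is what makes $\max(\beta_2, 1/\beta_2)$ switch branches), and then recognize that the crossover condition for $H_2 = H_1$ is governed by the product $\beta_1\beta_2 = 1$ rather than by $\beta_2 = \beta_1$. Everything else is a short, sign-tracking computation once the three identities above are established.
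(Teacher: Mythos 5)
Your proof is correct and follows essentially the same route as the paper's: both reduce the problem via Lemma \ref{lem:HellingerEquiv} to comparing pre/post variance ratios, split into the same three regions ($a > 1$; $2|\rho| - 1 < a < 1$; $0 < a < 2|\rho| - 1$), and settle the last region by the same quartic condition --- your factored identity $\beta_1\beta_2 - 1 = (a^2-1)(a^2+3-4\rho^2)/[4(1-\rho^2)]$ is exactly the paper's polynomial $a^4 - a^2(4\rho^2 - 2) + 4\rho^2 - 3$ from \eqref{eq:1var_tedious} in factored form. The only difference is presentational: your three identities (the sign of $\beta_1 - 1$, the positivity of $\beta_2 - \beta_1$, and the factored product) organize the same case bookkeeping a bit more transparently than the paper's direct resolution of the log-absolute-value inequality.
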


For a change in correlation, the minor projection is the most sensitive in most cases.
Only if the correlation changes direction and becomes stronger is the principal projection more sensitive.
\begin{myprop} \label{prop:cor}
  Let $\mu_1 = \mu_2 = 0$, $a_{11} = a_{22} = 1$ and $a_{12} = a \not= 1$ such that \eqref{eq:corBound} holds (the correlation changes).
  Then $H_2 > H_1$ for any $|\rho| \in (0, 1)$ and $a > -1$.
\end{myprop}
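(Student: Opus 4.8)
The plan is to reduce the comparison $H_2 > H_1$ to an elementary inequality in $\rho$ and $a$, exploiting that the mean does not change, so each $H_j$ depends only on the pre- and post-change variances of the $j$-th projection. First I would record the eigensystem of $\bSigma_0$: the eigenvalues are $\lambda_1 = 1 + |\rho|$ and $\lambda_2 = 1 - |\rho|$, with eigenvectors proportional to $(1,1)^\tp$ and $(1,-1)^\tp$ (their assignment to $\lambda_1,\lambda_2$ simply interchanging with the sign of $\rho$). Because that interchange is the only effect of flipping the sign of $\rho$, it suffices to treat $\rho \in (0,1)$, and I would state this reduction at the outset. Evaluating the post-change variances $s_j = \bv_j^\tp \bSigma_1 \bv_j$, where $\bSigma_1$ has diagonal entries $1$ and off-diagonal entry $a\rho$, then gives $s_1 = 1 + a\rho$ and $s_2 = 1 - a\rho$.

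Since $\bv_j^\tp \bmu_1 = 0 = \bv_j^\tp \bmu_0$, the exponential factor in the Hellinger formula equals $1$, so $H_j^2 = 1 - \sqrt{2\,g(r_j)}$, where $r_j := s_j/\lambda_j$ and $g(r) := \sqrt{r}/(1+r)$ depends only on the variance ratio $r_j$. Here I would invoke Lemma \ref{lem:HellingerEquiv} to turn $H_2 > H_1$ into the cleaner statement $g(r_2) < g(r_1)$. Using $g(r)^{-2} = r + 1/r + 2$ together with $r + 1/r - 2 = (r-1)^2/r$, this is in turn equivalent to $(r_2 - 1)^2/r_2 > (r_1 - 1)^2/r_1$; that is, the post-change-to-pre-change variance ratio of the minor projection must deviate from $1$ by more than that of the principal projection.

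The computation then collapses. I would compute $r_1 - 1 = (a-1)\rho/(1+\rho)$ and $r_2 - 1 = (1-a)\rho/(1-\rho)$, so that $(r_1-1)^2$ and $(r_2-1)^2$ share the common factor $(a-1)^2\rho^2$. After substituting the $r_j$ and cancelling this factor, the target inequality reduces to
\[
  \frac{1}{(1-\rho)(1-a\rho)} > \frac{1}{(1+\rho)(1+a\rho)},
\]
and comparing denominators gives $(1+\rho)(1+a\rho) - (1-\rho)(1-a\rho) = 2(1+a)\rho$, which is strictly positive precisely because $\rho > 0$ and $a > -1$. This is exactly where the hypothesis $a > -1$ does its work.

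The only genuine care needed, and the step I would flag as the main obstacle, is sign bookkeeping: both the reduction from $g(r_2) < g(r_1)$ to the ratio inequality and the denominator comparison are valid only once all four factors $1 \pm \rho$ and $1 \pm a\rho$ are confirmed strictly positive. These follow directly from $|\rho| < 1$ and from the constraint $-1 < a\rho < 1$ in \eqref{eq:corBound}, so I would verify them explicitly before dividing. With positivity in hand there are no further subtleties, and the conclusion $H_2 > H_1$ holds for every $|\rho| \in (0,1)$ and $a > -1$.
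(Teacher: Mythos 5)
Your proof is correct, and it reaches the conclusion by a genuinely cleaner route than the paper's. The paper applies Lemma \ref{lem:HellingerEquiv} in its final form, comparing $\bigl|\log r_2\bigr|$ with $\bigl|\log r_1\bigr|$, and must therefore split into cases according to the signs of the logarithms (whether $a<1$ or $a>1$), dissolving the absolute values separately in each case and reducing to $a^2<1$ and $a^2>1$ respectively via a difference-of-squares computation. You instead work with the monotone surrogate $(r-1)^2/r = r + 1/r - 2$ (which is in fact the intermediate quantity appearing inside the proof of Lemma \ref{lem:HellingerEquiv}, since $r+1/r = 2\cosh(\log r)$ is increasing in $|\log r|$), and this choice eliminates the case analysis entirely: the common factor $(a-1)^2\rho^2$ cancels, the comparison collapses to comparing the denominators $(1-\rho)(1-a\rho)$ and $(1+\rho)(1+a\rho)$, and the identity $(1+\rho)(1+a\rho)-(1-\rho)(1-a\rho)=2(1+a)\rho$ makes the role of the hypothesis $a>-1$ completely transparent — something the paper's proof leaves implicit in the boundary between its two cases. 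Your attention to the positivity of the four factors $1\pm\rho$ and $1\pm a\rho$, guaranteed by \eqref{eq:corBound}, is exactly the right bookkeeping; the paper relies on the same facts silently when it dissolves absolute values and cross-multiplies. Both arguments yield the full characterization ($H_2>H_1$ precisely when $a>-1$, with equality at $a=-1$), so nothing is lost; what your version buys is a single algebraic thread with no branching, at the small cost of re-deriving the variance-ratio reduction from the Hellinger formula rather than quoting the lemma's statement verbatim.
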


\section{Exploring higher dimensions} \label{sec:explore_higher}
In the two-dimensional case, we saw that which projection is the most sensitive depends both on the change $(\bmu_1, \bSigma_1)$ as well as the pre-change correlation matrix $\bSigma_0$.
For higher dimension $D$, solving inequalities like above for all the parameters in $(\bSigma_0, \bmu_1, \bSigma_1)$ quickly becomes extremely tedious.
Therefore, we use simulation to obtain Monte Carlo estimates $E[H_j(\bSigma_0, \bmu_1, \bSigma_1)]$ in stead,
where we vary which parameters that change, the size of the changes and the sparsity of the change (the number of dimensions that change).
Let $\rho_{i, d}$ for $i \not= d$ denote the off-diagonal elements of $\bSigma_0$, $\mu_{d}$ be the $d$'th element of $\bmu_1$, and $\sigma_{d}$ be the $d$'th diagonal element of $\bSigma_1$.
Then our simulation protocol to get such estimates is as follows:
\begin{enumerate}
  \item Draw a correlation matrix $\bSigma_0$ uniformly from the space of correlation matrices by the method of \citet{joe_generating_2006} (clusterGeneration::rcorrmatrix in R).
  \item Draw a change sparsity $K \sim \Unif\{ 2, \ldots, D  \}$.
  \item Draw a random subset $\mathcal{D} \subseteq \{ 1, \ldots, D \}$ of size $K$.
  \item Draw an additive change in mean $\mu \sim \Unif(-3, 3)$, and set $\mu_{d} = \mu$ for $d \in \mathcal{D}$, while $\bSigma_1 = \bSigma_0$.
  \item Draw a multiplicative change in standard deviation $\sigma \sim \frac{1}{2}\Unif(1/3, 1) + \frac{1}{2}\Unif(1, 3)$ (equal probability of decrease and increase in standard deviation)
  and set $\sigma_{d} = \sigma$ for $d \in \mathcal{D}$, keeping the remaining parameters constant.
  \item Draw a multiplicative change in correlation $a \sim \Unif(0, 1)$ and change $\rho_{i, d}$ to $a\rho_{i, d}$ for all $i \not= d \in \mathcal{D}$. The other parameters are kept constant.
  \item For each of the three change scenarios 4-6, calculate $H_j(\bSigma_0, \bmu_1, \bSigma_1)$, $j = 1, \ldots, D$.
  \item Repeat 2-7 $10^3$ times.
  \item Repeat 1-8 $10^3$ times.
\end{enumerate}
Averaging the simulated $H_j$'s yields estimates of $E[H_j]$, and we can condition on the type of parameter that change and the change sparsity to see what the sensitivity is expected to be
for different classes of changes.
(Note that we only consider decreases in correlation. This is to avoid getting too many indefinite $\bSigma_1$'s.
If indefinite $\bSigma_1$'s still occur, we find the closest positive-definite one by the Matrix::nearPD-function in R and set the diagonal to 1.)

Figure \ref{fig:hellinger} shows that the trend of the minor components being the most sensitive continues for $D = 20$ and $D = 100$.
This holds for changes in the mean, variance and correlation as well as all the different change sparsities.
From the quantile plots, however, observe that a lot of variation is hidden in these averages, 
meaning that which projection is the most sensitive will depend on the specific $\bSigma_0$ and change $(\bmu_1, \bSigma_1)$, as in the bivariate case.

\begin{figure}[ht]
  \centering
  \includegraphics[scale = 0.7]{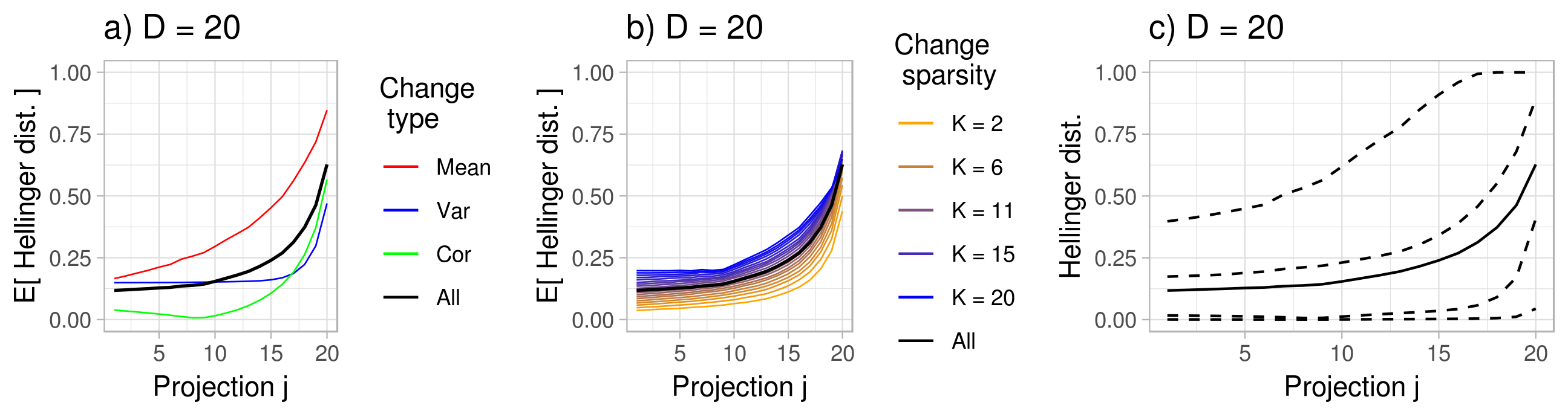}
  \includegraphics[scale = 0.7]{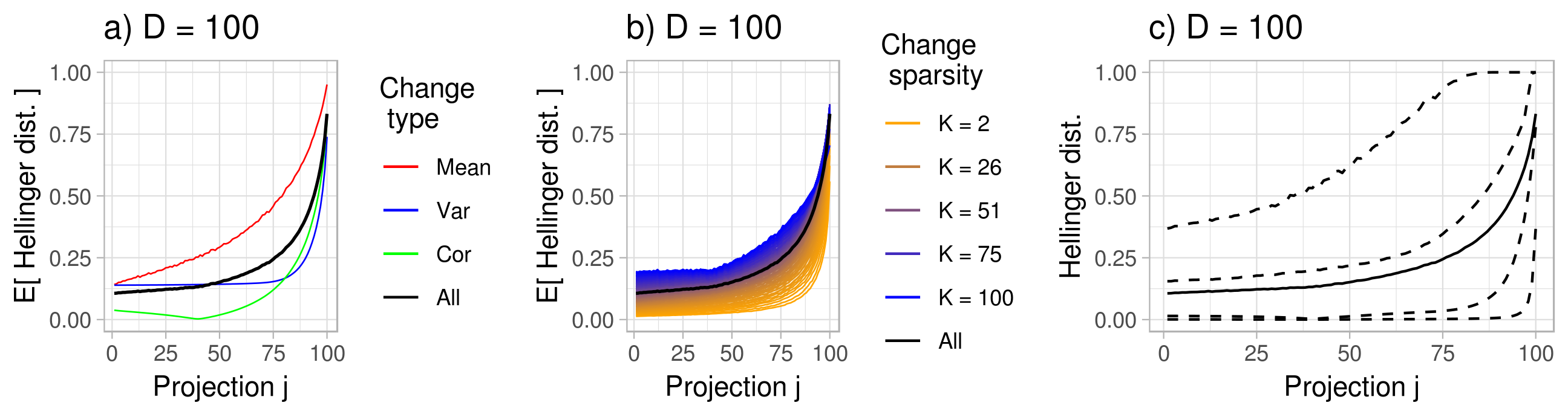}
  \caption{A summary of the sensitivity results obtained by the simulation protocol for $D = 20$ for $D = 100$.
  a) Monte Carlo estimates of $E[H_j]$ for uniformly drawn changes in the mean, variance and (decreases in) correlation, as well as uniformly drawn pre-change correlation matrices $\bSigma_0$.
  b) Same as a), but now the average sensitivity is conditional on the sparsity of the change, rather than the type of parameter.
  c) 0.05, 0.25, 0.75 and 0.95 percentiles (the dashed lines, from bottom to top) of the distribution of $H_j$, together with $E[H_j]$ (solid line).
  Note that the percentiles are over $\bSigma_0, \bmu_1$ and $\bSigma_1$ simultaneously.}
  \label{fig:hellinger}
\end{figure}

\section{Concluding remarks}
We have presented bivariate theory showing that the minor projection of PCA-rotated data is usually the most sensitive to changes, especially if the change is sparse.
Simulations also confirm this to be the case on average for higher dimensions, but also that the sensitivity strongly varies with the pre-change correlation matrix and the specific change.
In future work it will be interesting to exploit this knowledge about sensitivity for specific change detection tasks.

\section*{Acknowledgements}
This work is partially funded by the Norwegian Research Council centre Big Insight, project 237718.

\printbibliography

\setcounter{section}{0}
\renewcommand{\thesection}{\Alph{section}}
\section{Appendix: Proofs} \label{sec:proofs}
Before turning to the the proofs of the propositions in section \ref{sec:results},
the expressions for the pre- and post-change means and variances of each projection is needed.
The normalized eigenvectors (principal axes) and corresponding eigenvalues (variance in the data along a given principal axis) of 
$\bSigma_0$ are quickly verified to be
\begin{equation}
\begin{split}
&\lambda_1 = 1 + \rho, \quad
\mathbf{v}_1 = \frac{1}{\sqrt{2}}
\begin{pmatrix}
1 \\
1
\end{pmatrix} \\
&\lambda_2 = 1 - \rho, \quad
\mathbf{v}_2 = \frac{1}{\sqrt{2}}
\begin{pmatrix}
-1 \\
1
\end{pmatrix}
\end{split}
\label{eq:eigenSpace}
\end{equation}
Note that which principal axis is the dominant one depends on the sign of $\rho$.
If $\rho$ is positive, $\mathbf{v}_1$ is the dominant one, but $\mathbf{v}_2$ is dominant if $\rho$ is negative.

From the projections in \eqref{eq:eigenSpace}, the parameters of the projections before and after a change can be expressed as functions of the original correlation matrix and multiplicative change factors.
For the principal component, the original and changed variances become as follows, respectively.
\begin{equation}
\begin{split}
o_1^2 &= 1 + \rho \\
c_1^2 &= \frac{1}{2}a_{11}^2 + \frac{1}{2}a_{22}^2 + a_{11}a_{22}a_{12}\rho.
\end{split}
\label{eq:topPCvariances}
\end{equation}
The expressions for the variances of the minor component are identical up to one switched sign;
\begin{equation}
\begin{split}
o_2^2 &= 1 - \rho \\
c_2^2 &= \frac{1}{2}a_{11}^2 + \frac{1}{2}a_{22}^2 - a_{11}a_{22}a_{12}\rho.
\end{split}
\label{eq:botPCvariances}
\end{equation}
Observe that if $\rho < 0$, then $o_2$ and $c_2$ would be equal to $o_1$ and $c_1$ with positive $\rho$, and vice versa.
Thus, for $\rho \in (-1, 1)$, the general expressions are obtained by replacing $\rho$ with $|\rho|$.
Lastly, the changed mean components are given by
\begin{equation}
\begin{split}
m_1 &= \frac{1}{\sqrt{2}}(\mu_{1} + \mu_{2}) \\
m_2 &= \frac{1}{\sqrt{2}}(\mu_{1} - \mu_{2}).
\end{split}
\label{eq:PCmeans}
\end{equation}

We first prove Proposition \ref{prop:mean} for changes in the mean. 
\begin{proof}[Proof of Proposition \ref{prop:mean}]
Let $p_1(x) = N(x | 0, o_1^2)$, $q_1(x) = N(x | m_1, o_1^2)$,
$p_2(x) = N(x | 0, o_2^2)$ and $q_2(x) = N(x | m_2, o_2^2)$, where $m_i, o_i$ are as in \eqref{eq:topPCvariances}, \eqref{eq:botPCvariances} and \eqref{eq:PCmeans}, with $\rho$ replaced by $|\rho|$ as noted above.
The Hellinger distances between the distributions before and after a change along each principal axis is given by
\[
  H_j^2 = H^2(p_j, q_j) = 1 - \exp\left\{ - \frac{1}{8o_j^2}m_j^2 \right\}.
\]
Then some algebra results in the inequality we needed to prove;
\begin{align*}
  H_2 &> H_1 \\
  (1 + |\rho|)(\mu_1 - \mu_2)^2 &> (1 - |\rho|)(\mu_1 + \mu_2)^2 \\
  |\rho| (\mu_1 - \mu_2)^2 + \mu_1\mu_2(2|\rho| - 4) &> 0 \\
  (\mu_1 - \mu_2)^2 &> \mu_1\mu_2 (4/|\rho| - 2).
\end{align*}
From this inequality, the three special cases (i), (ii) and (iii) are immediately given.
\end{proof}

In the proofs concerning changes in the covariance matrix, we will make use of the following lemma.
It reduces the inequality of Hellinger distances to a simpler inequality of ratios of variances.
\begin{mylemma} \label{lem:HellingerEquiv}
  Let $p_1, q_1, p_2, q_2$ be 0-mean normal distribution functions with variances $\sigma^2_{p_1}, \sigma^2_{q_1}, \sigma^2_{p_2}$ and $\sigma^2_{q_2}$, respectively.
  Furthermore, let
  \[
    \log r_j = \left|\log \frac{\sigma^2_{q_j}}{\sigma^2_{p_j}}\right|, \quad j = 1, 2.
  \]
  Then $H(p_2, q_2) > H(p_1, q_1)$ if and only if $\log r_2 > \log r_1$.
\end{mylemma}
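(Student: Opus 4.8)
The plan is to collapse the whole comparison onto the monotonicity of a single one-variable function. Since all four distributions have mean zero, the exponential factor in the squared Hellinger formula equals $1$, so that
\[
  H^2(p_j, q_j) = 1 - \sqrt{\frac{2\sigma_{p_j}\sigma_{q_j}}{\sigma_{p_j}^2 + \sigma_{q_j}^2}}, \qquad j = 1, 2.
\]
Because $H \geq 0$, the inequality $H(p_2, q_2) > H(p_1, q_1)$ is equivalent to $H^2(p_2, q_2) > H^2(p_1, q_1)$, which by the display above is in turn equivalent to the \emph{reversed} inequality between the two square-root (Bhattacharyya-type) coefficients.

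Next I would rewrite each coefficient using only the variance ratio $t_j = \sigma^2_{q_j}/\sigma^2_{p_j}$. Dividing numerator and denominator by $\sigma^2_{p_j}$ gives
\[
  \frac{2\sigma_{p_j}\sigma_{q_j}}{\sigma_{p_j}^2 + \sigma_{q_j}^2} = \frac{2\sqrt{t_j}}{1 + t_j} = \phi(t_j), \qquad \phi(t) := \frac{2\sqrt{t}}{1+t},
\]
so that $H^2(p_j, q_j) = 1 - \sqrt{\phi(t_j)}$ and the claim reduces to showing $\phi(t_2) < \phi(t_1)$ if and only if $\log r_2 > \log r_1$, where $\log r_j = |\log t_j|$.

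The heart of the argument is then to show that $\phi$ depends on $t$ only through $|\log t|$ and is strictly decreasing in that quantity. The cleanest route is the substitution $t = e^s$, under which $\phi(e^s) = 2e^{s/2}/(1+e^s) = 1/\cosh(s/2)$. Since $\cosh$ is even and strictly increasing on $[0,\infty)$, the map $\phi(e^s) = 1/\cosh(|s|/2)$ is a strictly decreasing function of $|s| = |\log t|$. Writing $|\log t_j| = \log r_j$ then gives $\phi(t_j) = 1/\cosh(\log r_j / 2)$, and strict monotonicity yields $\phi(t_2) < \phi(t_1)$ exactly when $\log r_2 > \log r_1$. Chaining this with the equivalences above proves the lemma.

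I do not anticipate a serious obstacle here; the only point requiring care is keeping the direction of each equivalence straight, since the map from $H^2$ to the Bhattacharyya coefficient is order-reversing and so is the map $\log r_j \mapsto \phi(t_j)$, the two reversals cancelling to produce the stated "if and only if". One could equally bypass the hyperbolic substitution by verifying directly that $\phi(t) = \phi(1/t)$, so $\phi$ depends only on $|\log t|$, together with $\phi'(t) = (1-t)/[\sqrt{t}\,(1+t)^2] < 0$ for $t > 1$, which establishes the same two properties.
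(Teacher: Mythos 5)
Your proof is correct, and it shares the paper's skeleton — the zero means kill the exponential factor, so everything reduces to comparing a function of the variance ratio — but your execution of the key monotonicity step is a genuine streamlining. The paper expands the Hellinger affinity as $\sqrt{2}\,(r + r^{-1} + 2)^{-1/4}$ with $r = \sigma_p^2/\sigma_q^2$, reduces the claim to $r_2 + r_2^{-1} > r_1 + r_1^{-1}$, proves this equivalent to $r_2 > r_1$ via the factorization $(r_2 - r_1)\bigl(1 - \tfrac{1}{r_1 r_2}\bigr) > 0$ under the provisional assumption $r_1, r_2 > 1$, and then disposes of the remaining cases with a separate interchange-symmetry remark that justifies always putting the larger variance in the numerator. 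Your identification $\phi(e^s) = 1/\cosh(s/2)$ collapses that two-stage argument into one step: the evenness of $\cosh$ \emph{is} the paper's interchange symmetry, and the strict monotonicity of $\cosh$ on $[0,\infty)$ \emph{is} what the factorization establishes — indeed $r + r^{-1} + 2 = 4\cosh^2\bigl(\tfrac{1}{2}\log r\bigr)$, so the two proofs compare the very same quantity. What your route buys is that no case analysis is needed and the dependence on $|\log t|$ alone becomes transparent rather than being imposed at the end; what the paper's route buys is that it stays within elementary algebra, never invoking hyperbolic functions.
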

\begin{proof}
  First observe that when the means are $0$ we can write the Hellinger distance between two normal distributions as the following.
  \begin{align*}
    H^2 (p, q) &= 1 - \left( \frac{2\sigma_p\sigma_q}{\sigma_p^2 + \sigma_q^2} \right)^{1/2} \\
    &= 1 - \sqrt{2}\left( \frac{\sigma_p}{\sigma_q} + \frac{\sigma_q}{\sigma_p} \right)^{-1/2} \\
    &= 1 - \sqrt{2}\left( \frac{\sigma_p^2}{\sigma_q^2} + \frac{\sigma_q^2}{\sigma_p^2} + 2 \right)^{-1/4}.
  \end{align*}
  This gives us the inequality
  \begin{align*}
    H(p_2, q_2) &> H(p_1, q_1) \\
    \frac{\sigma_{p_2}^2}{\sigma_{q_2}^2} + \frac{\sigma_{q_2}^2}{\sigma_{p_2}^2} &> \frac{\sigma_{p_1}^2}{\sigma_{q_1}^2} + \frac{\sigma_{q_1}^2}{\sigma_{p_1}^2}.
  \end{align*}
  By setting $r_2 = \sigma_{p_2}^2 / \sigma_{q_2}^2$ and $r_1 = \sigma_{p_1}^2 / \sigma_{q_1}^2$, the inequality can be written as
  \[
    r_2 + r_2^{-1} > r_1 + r_1^{-1}.
  \]
  Now assume first that $r_1, r_2 > 1$, i.e., $\sigma_{p_j}^2 > \sigma_{q_j}^2$.
  Then we see that
  \begin{align*}
    r_2 + r_2^{-1} &> r_1 + r_1^{-1} \\
    r_2 - r_1 + \frac{r_1 - r_2}{r_1r_2} &> 0 \\
    (r_2 - r_1)\Big(1 - \frac{1}{r_1r_2}\Big) &> 0.
  \end{align*}
  By the assumption that $r_1, r_2 > 1$, this inequality holds if and only if $r_2 > r_1$.
  
  Finally, note that by interchanging $\sigma_{p_j}^2$ and $\sigma_{q_j}^2$, the same result is obtained when $\sigma_{q_j}^2 \geq \sigma_{p_j}^2$.
  Thus, to make the result hold in general, we can set
  \[
    r_j = \exp\left\{\Big|\log \frac{\sigma^2_{q_j}}{\sigma^2_{p_j}}\Big|\right\}, \quad j = 1, 2,
  \]
  which is an expression for the ratio between variances where the largest of the variances is always in the numerator.
  Therefore we get that $\log r_2 > \log r_1$ is equivalent to $H_2 > H_1$.
\end{proof}

The rest of this article contain the individual proofs of the remaining propositions in the main body of the text.
%
\begin{proof}[Proof of Proposition \ref{prop:2var}]
When assuming that $a_{12} = 1$ and $a_{11} = a_{22} = a \not= 1$, we get that
\[
  \log r_2 = \left| \log \frac{a^2/2 + a^2/2 - |\rho| a^2}{1 - |\rho|} \right| = |\log a^2|,
\]
and
\[
  \log r_1 = \left| \log \frac{a^2/2 + a^2/2 + |\rho| a^2}{1 + |\rho|} \right| = |\log a^2|.
\]
Hence, by arguments similar to the proof of Lemma \ref{lem:HellingerEquiv}, we see that $H_2 = H_1$ no matter what $|\rho|$ or $a$ is.
\end{proof}

%
\begin{proof}[Proof of Proposition \ref{prop:1var}]
  Using the formulas for the variances of the projections \eqref{eq:topPCvariances} and \eqref{eq:botPCvariances},
  the inequality we have to study according to Lemma \ref{lem:HellingerEquiv} becomes the following,
  \begin{align}
    \left| \log \frac{a^2 - 2a|\rho| + 1}{2(1 - |\rho|)} \right| &> \left| \log \frac{a^2 + 2a|\rho| + 1}{2(1 + |\rho|)} \right| \notag \\
    \left| \log \left[ \frac{(1 - a)^2}{2(1 - |\rho|)} + a \right] \right| &> \left| \log \left[ \frac{(1 - a)^2}{2(1 + |\rho|)} + a \right] \right|. \label{eq:1var_ineq}
  \end{align}
  First, we have to find out the sign of the expressions inside the absolute values for each $a$ and $|\rho|$.
  For the left-hand side, we get
  \begin{align*}
    &\frac{(1 - a)^2}{2(1 - |\rho|)} + a = 1 \\
    &a = 1 \text{ and } a = 2|\rho| - 1.
  \end{align*}
  Thus, for $a > 1$ and $a < 2|\rho| - 1$, the left-hand side is positive, while negative in between.
  For the right-hand side, the expression inside the absolute value signs are positive for $a > 1$ and $a < - (1 + 2|\rho|)$. 
  Since $a > 0$, however, the relevant root for the right-hand side is only $a = 1$.
  In total, this gives us three regions of $(a, |\rho|)$-values to check inequality \eqref{eq:1var_ineq}: $a > 1$ and $|\rho| \in (0, 1)$, $a \in (2|\rho| - 1, 1)$ and $|\rho| \in (0, 1)$, and $a \in (0, 2|\rho| - 1)$ and $|\rho| \in (1/2, 1)$.
  \paragraph{$a > 1$ and $|\rho| \in (0, 1)$:}
  The absolute value signs can now be dissolved, so that inequality \eqref{eq:1var_ineq} becomes
  \[
    \frac{(1 - a)^2}{(1 - |\rho|)} > \frac{(1 - a)^2}{(1 + |\rho|)}.
  \]
  Since $|\rho| \in (0, 1)$, we see that the inequality holds for any $a > 1$.
  Hence, $H_2 > H_1$ in this scenario; when the variance increases.
  \paragraph{$a \in (2|\rho| - 1, 1)$ and $|\rho| \in (0, 1)$:}
  In this case, inequality \eqref{eq:1var_ineq} becomes
  \[
    \frac{(1 - a)^2}{(1 - |\rho|)} < \frac{(1 - a)^2}{(1 + |\rho|)}.
  \]
  I.e., it does not hold for any of the $a$'s or $|\rho|$'s within the relevant region. Note that when $|\rho| < 1/2$, $a$ is kept between $(0, 1)$.
  \paragraph{$a \in (0, 2|\rho| - 1)$ and $|\rho| \in (1/2, 1)$:}
  Now we get the inequality
  \[
    \frac{(1 - a)^2}{2(1 - |\rho|)} + a > \left(\frac{(1 - a)^2}{2(1 + |\rho|)} + a\right)^{-1},
  \]
  which is equivalent to
  \begin{equation}
    a^4 - a^2(4\rho^2 - 2) + 4\rho^2 - 3 > 0.
    \label{eq:1var_tedious}
  \end{equation}
  The roots of the function on the left-hand side are $a = \pm 1$ and $a = \pm \sqrt{4\rho^2 - 3}$, but the only relevant root for $a \in (0, 2|\rho| - 1)$ and $|\rho| \in (1/2, 1)$ is $a_0 := \sqrt{4\rho^2 - 3}$.
  
  Next, for $|\rho| < \sqrt{3}/2$ the root $a_0$ moves into the complex plane, and the function on the left-hand side of \eqref{eq:1var_tedious} is always less than $0$ for the relevant $a$'s. I.e., $H_2 < H_1$ in this case.
  If $|\rho| > \sqrt{3}/2$, on the other hand, then \eqref{eq:1var_tedious} holds for $a \in (0, a_0)$, but not for $a \in (a_0, 2|\rho| - 1)$.
\end{proof}
%

\begin{proof}[Proof of Proposition \ref{prop:cor}]
  In this scenario, the inequality to check due to Lemma \ref{lem:HellingerEquiv} and expressions \eqref{eq:topPCvariances} and \eqref{eq:botPCvariances} is
  \begin{equation}
    \left| \log \frac{1 - a|\rho|}{1 - |\rho|} \right| > \left| \log \frac{1 + a|\rho|}{1 + |\rho|} \right|.
    \label{eq:cor_ineq}
  \end{equation}
  To dissolve the absolute value signs we first have to see for which values of $a$ and $|\rho|$ the expressions inside are positive or negative. It is easily verified that the expression inside the left-hand side absolute value is positive for $a < 1$, while the right-hand side is positive if $a > 1$, both being negative otherwise.
  
  First assume that $a < 1$. Then inequality \eqref{eq:cor_ineq} becomes 
  \begin{align*}
    \frac{1 - a|\rho|}{1 - |\rho|} &> \frac{1 + |\rho|}{1 + a|\rho|} \\
    1 - (a\rho)^2 &> 1 - \rho^2 \\
    a^2 &< 1
  \end{align*}
  Hence, $a \in (-1, 1)$ yields $H_2 > H_1$.
  On the other hand, if $a > 1$, we obtain
  \begin{align*}
    \frac{1 - |\rho|}{1 - a|\rho|} &> \frac{1 + a|\rho|}{1 + |\rho|} \\
    a^2 &> 1,
  \end{align*}
  which is always true.
  Thus, in total, $H_2 < H_1$ only if $a < -1$.
\end{proof}

\end{document}